\documentclass[11pt]{amsart}

\usepackage[T1]{fontenc}
\usepackage[utf8]{inputenc}
\usepackage{lmodern}
\usepackage{microtype}
\usepackage[margin=1.22in]{geometry}
\usepackage{amsmath,amssymb,amsfonts,amsthm,mathtools}
\usepackage{mathrsfs}
\usepackage{enumitem}
\usepackage[colorlinks=true,linkcolor=blue,citecolor=blue,urlcolor=blue]{hyperref}
\usepackage[nameinlink,noabbrev]{cleveref}

\numberwithin{equation}{section}

\theoremstyle{plain}
\newtheorem{theorem}{Theorem}[section]
\newtheorem{proposition}[theorem]{Proposition}

\newtheorem{lemma}[theorem]{Lemma}
\newtheorem{conjecture}[theorem]{Conjecture}

\theoremstyle{definition}

\newtheorem{problem}[theorem]{Problem}

\theoremstyle{remark}

\newcommand{\C}{\mathbb C}

\newcommand{\Sym}{\mathfrak S}
\newcommand{\sgn}{\operatorname{sgn}}
\newcommand{\Fix}{\operatorname{Fix}}

\newcommand{\per}{\operatorname{per}}
\newcommand{\imm}{\operatorname{imm}}

\newcommand{\la}{\lambda}

\title[On immanants of Cayley tables]{On immanants of Cayley tables}
\author{Xuan Wang, Hanbin Zhang}
	\address{School of Mathematics (Zhuhai), Sun Yat-sen University, Zhuhai 519082, Guangdong, P.R. China}
	\email{wangx728@mail2.sysu.edu.cn, zhanghb68@mail.sysu.edu.cn}

\begin{document}

\begin{abstract}
Let $G$ be a finite abelian group of order $n$ and let \(
        \mathcal M_G=(x_{a+b})_{a,b\in G}
\)
be the Cayley table of $G$.  Let $\imm_\lambda(\mathcal M_G)$ be the immanant of $\mathcal M_G$ with respect to a partition $\lambda$ and $\mathcal I_\lambda(G)$ be the number of formally different monomials occurring in $\imm_\lambda(\mathcal M_G)$ (in particular, we denote by $\mathcal P(G)$ (resp. $\mathcal D(G)$) for the corresponding quantity for $\per(\mathcal M_G)$ (resp. $\det(\mathcal M_G)$) for simplicity). The study of $\mathcal P(G)$ and $\mathcal D(G)$ lies at the intersection of algebraic combinatorics and additive combinatorics. In this paper, we prove the following results.

\begin{enumerate}
    \item If $|G|$ is a prime power, then
    $$\mathcal P(G)=\mathcal D(G).$$
    \item If $|G|$ is odd, then
\[
        \mathcal I_{(n-1,1)}(G)= \mathcal I_{(2,1^{n-2})}(G)=0,
\]
and that if $|G|\equiv 2\pmod 4$, then
\[
        \mathcal I_{(n-1,1)}(G)=\mathcal P(G)\quad
        \text{and}\quad
        \mathcal I_{(2,1^{n-2})}(G)=\mathcal D(G).
\] 
    \item If $|G|$ is odd and $|G|\ge 7$, then
\[
        \imm_{(4,1^{n-4})}(\mathcal M_G)=\imm_{(2,2,2,1^{n-6})}(\mathcal M_G).
\]
\end{enumerate}

\end{abstract}

\subjclass[2020]{05E10, 15A15, 05A15, 20K01, 11B75}
\keywords{immanants, Cayley table, finite abelian group, zero-sum sequence, Specht module, Hall fiber, group determinant}

\maketitle

\section{Introduction}

For an $n\times n$ matrix $\mathcal M=(m_{i,j})_{1\le i,j\le n}$, its immanant with respect to a partition $\lambda$ is defined as
$$\mathsf {imm}_{\lambda}(\mathcal M)=\sum_{\tau\in \Sym_n}\chi^{\lambda}(\tau)\prod_{i=1}^{n}m_{i,\tau(i)}.$$
Note that, determinant (resp. permanent) is the immanant corresponding to the sign character (resp. the trivial character):
\[
   \imm_{(1^n)}(\mathcal M)=\det(\mathcal M),
   \qquad
   \imm_{(n)}(\mathcal M)=\per (\mathcal M).
\]
The prehistory of immanants goes back to Schur's work on Hermitian forms and characters, he considered generalized matrix functions associated with group characters and proved fundamental inequalities for positive semidefinite Hermitian matrices; see \cite{Schur1918,MarcusMinc1965}.  The word and systematic study of immanants entered the literature through Littlewood and Richardson's work \cite{LittlewoodRichardson1934a,LittlewoodRichardson1934b}, their papers placed immanants in the representation theory of symmetric groups and in the emerging language of symmetric functions. Classical works on immanants developed along several directions: matrix inequalities and the permanental-dominance program \cite{Marcus1963,Lieb1966,MerrisWatkins1985,Pate1994,Stembridge1992}; combinatorial and symmetric-function formulas for Jacobi--Trudi and related matrices \cite{GouldenJackson1992,GouldenJackson1992b,Greene1992,StanleyStembridge1993}; and positivity phenomena on totally positive or totally nonnegative matrices \cite{Stembridge1991,RhoadesSkandera2006,Pylyavskyy2010,LY}.

Subsequent work connected immanants with Hecke algebras, Kazhdan--Lusztig theory, Temperley--Lieb quotients, and quantum matrix identities \cite{Haiman1993,RhoadesSkandera2005,RhoadesSkandera2006,RhoadesSkandera2010,KonvalinkaSkandera2011}; with probabilistic models such as finite point processes \cite{DiaconisEvans2000}; and with computational complexity, where determinant and permanent sit at opposite ends of a sharp complexity landscape for immanant families \cite{Hartmann1985,Curticapean2021}.  Recent contributions continue to enlarge this circle of ideas, for instance through new hook-shape character formulas and quasisymmetric refinements \cite{Lesnevich2024,Campbell2025}.  In short, immanants present a rich interplay between representation theory, algebraic combinatorics, matrix theory, probability, and complexity.

In this paper, we study a highly structured specialization of the immanant problem, namely the Cayley table of a finite abelian group.  Let $G$ be a finite abelian group of order $n$, written additively, and put
\[
        \mathcal M_G=(x_{a+b})_{a,b\in G},
\]
where $x_g$ $(g\in G)$ are commuting indeterminates.  For a partition $\lambda\vdash n$, let $\chi^\lambda$ denote the irreducible character of $\Sym_n$ indexed by $\lambda$.  
After identifying the rows and columns with $G$, we may write
\[
        \imm_\lambda(\mathcal M_G)=\sum_{\sigma\in \Sym(G)} \chi^\lambda(\sigma)
        \prod_{a\in G}x_{a+\sigma(a)}.
\]
All characters are understood through the natural identification of $\Sym(G)$ with $\Sym_n$; character values therefore depend only on cycle type.

The study of $\det(\mathcal M_G)$ can be traced back to seminal works of Dedekind and Frobenius, which led to the foundation of representation theory of groups; see \cite{John}.

In 1952, with an elegant constructive approach, Hall \cite{Hall1952} studied $\per(\mathcal M_G)$ and proved the following interesting result.

\begin{theorem}{\rm(\cite{Hall1952})}\label{Hal}
    Let $G=\{g_0,\cdots,g_{n-1}\}$ be an abelian group of order $n$ with $g_0$ the identity. Then the monomial $x_{g_0}^{\lambda_0}x_{g_1}^{\lambda_1}\cdots x_{g_{n-1}}^{\lambda_{n-1}}$ occurs in $\per(\mathcal M_{G})$ if and only if $$\lambda_0+\cdots +\lambda_{n-1}=n\quad \text{and}\quad
{\lambda_0}g_0+{\lambda_1}g_1+\cdots+{\lambda_{n-1}} g_{n-1}=g_0,$$
    where, for any $0\leq j\leq n-1$, $\lambda_jg_j \coloneqq g_j+\cdots+ g_j$ ($\lambda_j$ times).
\end{theorem}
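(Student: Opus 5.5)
The plan is to reduce the statement to a combinatorial statement about bijections of $G$, and then prove that statement by Hall's exchange induction. Each $\sigma\in\Sym(G)$ contributes the monomial $\prod_{a\in G}x_{a+\sigma(a)}$ with coefficient $+1$. Since all coefficients of $\per(\mathcal M_G)$ are nonnegative, there is no cancellation. So a monomial $\prod_j x_{g_j}^{\lambda_j}$ occurs if and only if some $\sigma$ makes the multiset $\{a+\sigma(a):a\in G\}$ equal to the multiset containing $g_j$ exactly $\lambda_j$ times.

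\emph{Necessity.} The degree is clearly $n$. For the second condition, put $s=\sum_{g\in G}g$. Then
\[
\sum_{a\in G}\bigl(a+\sigma(a)\bigr)=2s .
\]
The map $g\mapsto -g$ permutes $G$, so $s=-s$ and hence $2s=0=g_0$.

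\emph{Sufficiency.} Let $(b_g)_{g\in G}$ be any labelling of the prescribed multiset by elements of $G$, so that $\sum_g b_g=0$. Writing $\tau(a)=-\sigma(a)$, we need a bijection $\tau$ of $G$ with $a-\tau(a)=b_a$ for all $a$. So the goal becomes Hall's theorem on differences: for every sequence $(b_1,\dots,b_n)$ of elements of $G$ with zero sum, there are two orderings $(a_i)$ and $(c_i)$ of $G$ with $a_i-c_i=b_i$ for all $i$.

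I would prove this by induction on the number of ``correct'' coordinates, using the following exchange lemma. Suppose $(a_i),(c_i)$ realise $(b_1,\dots,b_{n-2},d,e)$ with $d+e=b_{n-1}+b_n$. Then we can modify them to realise $(b_1,\dots,b_n)$ while keeping the same underlying sets.

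The start of the induction is easy: the sequence $(0,\dots,0)$ is realised by $a_i=c_i$. From there, change two coordinates at a time, preserving the total sum, until we reach the target sequence.

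To prove the exchange lemma, I would use Hall's chain construction.
\begin{enumerate}[label=(\roman*)]
\item Set $b_{n-1}$ at position $n-1$. This forces a new partner $c'=a_{n-1}-b_{n-1}$.
\item If $c'$ equals $c_{n-1}$ or $c_n$, we are done after adjusting position $n$. Position $n$ automatically gets difference $b_n$, because the sums match.
\item Otherwise $c'=c_{i_1}$ for some $i_1\le n-2$. Swap $a_{i_1}$ with the free $a$-value to keep $a_{i_1}-c_{i_1}=b_{i_1}$. This frees a new $c$-value, and we repeat.
\end{enumerate}

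The main obstacle is showing that this chain terminates, that is, that the indices $i_1,i_2,\dots$ never repeat before the chain hits $c_{n-1}$ or $c_n$. For this I would follow Hall. The elements reached in the chain form an arithmetic-type progression determined by the fixed differences. A repetition would force a cycle, and comparing the sum of the differences along that cycle with the total zero-sum condition gives a contradiction. I would first check this carefully in the cyclic case, and then note that the argument uses only commutativity.
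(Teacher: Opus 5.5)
Your reduction to Hall's difference theorem, the necessity argument, and the induction from $(0,\dots,0)$ by sum-preserving two-coordinate changes are fine. This is Hall's own strategy; the paper gives no proof of this theorem, only the citation.

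There is one small slip. For an \emph{arbitrary} labelling $(b_g)$ the requirement $a-\tau(a)=b_a$ can fail, e.g.\ in $C_3$ with $b_0=1$, $b_1=2$, $b_2=0$. You need both orderings free, which is what your next sentence states.

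The genuine gap is the exchange lemma, which carries the whole content of the theorem.
\begin{itemize}
\item Step (iii) is not well defined as written. Once $c_{i_1}$ has gone to position $n-1$, position $i_1$ no longer holds it, so there is nothing to ``keep''. A new $c$-value has to be \emph{taken}, not freed.
\item The precise rule matters. Suppose you hand position $i_1$ the released $c_{n-1}$ and repair it by an $a$-swap (then a $c$-swap, and so on). This process can run forever. For example, take $C_2\times C_2=\{0,u,v,w\}$ with $a_i=c_i$ given by $(u,w,0,v)$ and target $(0,0,u,u)$. The values just rotate between positions $1$ and $3$ inside $\{0,u\}$, and position $n$ is never reached.
\item Your termination argument cannot fix this. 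The identity $\sum_i(a_i-c_i)=0$ holds for \emph{every} pair of orderings of $G$, so the zero-sum condition carries no information about a cycle. Also, the values the chain visits are not in general an arithmetic progression.
\end{itemize}

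The missing idea is a telescoping identity that turns the chain into the orbit of a fixed permutation.
\begin{itemize}
\item Define the chain precisely. Put $i_0:=n$. For $k\ge1$, let position $i_k$ take the $a$-value $a_{i_{k-1}}$ released at the previous step, together with the $c$-value $c_{i_{k+1}}:=a_{i_{k-1}}-b_{i_k}$; this defines $i_{k+1}$ as the position originally holding that value.
\item Stop at the first $r$ with $i_{r+1}\in\{n-1,n\}$. Position $n$ then gets $a_{i_r}$ and the leftover element of $\{c_{n-1},c_n\}$. Its difference is $b_n$ because $d+e=b_{n-1}+b_n$.
\item Since $b_{i_k}=a_{i_k}-c_{i_k}$ for $k\ge1$, we get $c_{i_{k+1}}-c_{i_k}=a_{i_{k-1}}-a_{i_k}$. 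Telescoping gives $c_{i_{k+1}}=c_{i_1}+a_n-a_{i_k}$.
\item Hence $i_{k+1}=\varphi(i_k)$, where $\varphi$ is the permutation of positions defined by $c_{\varphi(i)}=c_{i_1}+a_n-a_i$, and $\varphi(n)=i_1$.
\item So $i_1,i_2,\dots$ is the $\varphi$-orbit of $n$. It cannot repeat before returning to $n$, and reaching $n$ is itself a stopping condition.
\end{itemize}
That, not a sum comparison, is the termination argument. It is also exactly where commutativity is used.
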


Note that, a sequence $S=g_0^{\lambda_0}g_1^{\lambda_1}\cdots g_{n-1}^{\lambda_{n-1}}$ satisfying ${\lambda_0}g_0+{\lambda_1}g_1+\cdots+{\lambda_{n-1}} g_{n-1}=g_0$ is called a zero-sum sequence over $G$ of length $n$, which is an important object in additive combinatorics.
Let $\mathcal I_\lambda(G)$ be the number of formally different monomials occurring in $\imm_\lambda(\mathcal M_G)$. In particular, we denote by $\mathcal P(G)$ (resp. $\mathcal D(G)$) for the corresponding quantity for $\per(\mathcal M_G)$ (resp. $\det(\mathcal M_G)$) for simplicity. For example, for
    $$\mathcal M_{C_3}=\begin{pmatrix}
    x_0 & x_1 & x_2\\
    x_1 & x_2 & x_0\\
    x_2 & x_0 & x_1
    \end{pmatrix},
    $$
we have $\mathsf{det}(\mathcal M_{C_3})=-x_0^3-x_1^3-x_2^3+3x_0x_1x_2$ and $\mathsf {per}(\mathcal M_{C_3})=x_0^3+x_1^3+x_2^3+3x_0x_1x_2$, which implies that $\mathcal P(C_3)=\mathcal D(C_3)=4$. Due to possible cancellation, we have $\mathcal D(G)\le \mathcal P(G)$. Li and Zhang \cite{LiZhang2024} proved that, for finite abelian groups $G$ and $H$, $\mathcal P(G)=\mathcal P(H)$ if and only if $G\cong H$. Using symmetric functions, Thomas \cite{Thomas2004} proved that, if $n$ is a prime power, $\mathcal D(C_n)= \mathcal P(C_n)$. Later, Colarte, Mezzetti, Mir\'o-Roig, and Salat proved the converse statement, i.e., $\mathcal D(C_n)= \mathcal P(C_n)$ only if $n$ is a prime power; see \cite{Colarte2019}. Later Panyushev \cite{Panyushev2011} proposed the following problem.
\begin{problem}
     What are necessary/sufficient conditions on a finite abelian group $G$ for the equality $\mathcal D(G)= \mathcal P(G)$? Specifically, is it still true that the condition ‘$|G|$ is a
prime power’ is sufficient?
\end{problem}
Our first main result is the following.

\begin{theorem}\label{PGDG}
    Let $G$ be a finite abelian group and $|G|$ is a prime power, then we have
    $$\mathcal P(G)=\mathcal D(G).$$
\end{theorem}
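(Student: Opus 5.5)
The plan is to work with Frobenius' factorization of the group determinant rather than with permutations directly. Write $\widehat G$ for the dual group. Over $\C$ we have
\[
\det(\mathcal M_G)=\pm\prod_{\chi\in\widehat G} L_\chi,\qquad L_\chi=\sum_{g\in G}\chi(g)x_g .
\]
Here the sign comes from the row permutation $a\mapsto -a$ that turns $(x_{a+b})$ into the group matrix $(x_{b-a})$.

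By Theorem~\ref{Hal}, the monomials of $\per(\mathcal M_G)$ are exactly the $x^S$ with $S$ a zero-sum sequence of length $n$. Since $\mathcal D(G)\le\mathcal P(G)$ always, it suffices to show that for every such $S$ the coefficient $c(S)$ of $x^S$ in $\prod_\chi L_\chi$ is nonzero. Expanding the product, $c(S)$ is a sum of products of values $\chi(g)$, so $c(S)\in\mathbb Z[\zeta_n]$ with $n=p^k$.

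The idea is to prove $c(S)\neq 0$ by a $p$-adic argument in $\mathbb Z[\zeta_n]$. The ideal $(\pi)$ with $\pi=1-\zeta_n$ is the unique prime over $p$, and it is totally ramified. Reducing modulo $\pi$ sends every $\chi(g)$ to $1$, so
\[
c(S)\equiv \binom{n}{\lambda_0,\dots,\lambda_{n-1}}\pmod{\pi}.
\]
This alone is insufficient, because the multinomial coefficient is often divisible by $p$; for example, the group determinant mod $p$ collapses to $\pm\sum_g x_g^{n}$. I would therefore expand each $\chi(g)=(1-\pi)^{e(\chi,g)}$, where $e(\chi,g)$ is an exponent determined by $\chi(g)=\zeta_n^{e(\chi,g)}$, and organize $c(S)$ as a power series in $\pi$. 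The aim is to identify a leading term whose $\pi$-adic valuation is strictly smaller than that of all remaining contributions.

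To organize the expansion I would induct on $k$ along a chain of subgroups. Choose $H\le G$ of index $p$. Then $\widehat G$ splits into the $p$ cosets of $H^\perp\cong C_p$, and
\[
\prod_{\chi\in\widehat G}L_\chi=\prod_{\psi\in \widehat H}\ \prod_{\eta\in H^\perp}L_{\tilde\psi\eta},
\]
where $\tilde\psi$ is a fixed lift of $\psi$. Each inner product over $\eta\in H^\perp$ is a $C_p$-circulant determinant in the $p$ variables obtained by summing the $x_g$ over the cosets of $H$, twisted by $\tilde\psi$. This is the structure Thomas \cite{Thomas2004} exploits in the cyclic case. I would try to show the following inductive statement: the lowest-valuation part of $c(S)$ factors as the corresponding quantity for $H$, applied to a suitable zero-sum "contracted" sequence, times a $C_p$-factor whose lowest term is a unit times an explicit power of $\pi$. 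The $C_p$ case itself is elementary: all zero-sum monomials of length $p$ survive in the $p\times p$ circulant determinant.

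The alternative I would keep in reserve is a combinatorial version of the same valuation count. Let $G$ act on the permutations $\sigma$ with $\{a+\sigma(a)\}=S$ by $\sigma\mapsto\tau_t\sigma\tau_t$, where $\tau_t(a)=a+t$. This action preserves both the monomial and the sign, and every orbit has $p$-power size. The plan there is to show that some stabilizer type occurs with orbit-count not divisible by $p$, and then to control the signs.

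The main obstacle is the valuation comparison in the inductive step. A sequence $S$ can distribute its mass over the cosets of $H$ in many ways, and one must show that exactly one "leading" distribution attains the minimal $\pi$-adic valuation. The danger is that several distributions tie and their unit parts cancel. Thomas' symmetric-function computation handles this for $C_{p^k}$, where the subgroup chain is unique. For noncyclic $G$ I would first try choosing $H$ adapted to $S$, for instance so that some coset of $H$ carries a number of terms of $S$ not divisible by $p$, in order to break such ties.
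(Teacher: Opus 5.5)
There is a genuine gap. You correctly reduce the theorem to showing $c(S)\neq0$ for every zero-sum $S$ of length $n$, and you correctly note that reduction modulo $\pi$ only recovers the multinomial coefficient, which is usually divisible by $p$. But you never supply the step that proves the theorem: a contribution to $c(S)$ whose valuation is strictly smaller than that of everything else. You flag this yourself as the open obstacle.

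The inductive claim you would need is not justified and is not evidently true. The inner factors $\prod_{\eta\in H^\perp}L_{\tilde\psi\eta}$ are $C_p$-determinants in the twisted coset sums $z^{(\psi)}_c=\sum_{g\in c}\tilde\psi(g)x_g$. The coefficient of $x^S$ in the product over all $\psi\in\widehat H$ mixes these factors, so neither the $C_p$ base case nor an inductive statement about $H$ applies to it directly. Nothing in the proposal shows that the lowest $\pi$-adic part factors as you describe, or that ties between coset distributions do not cancel.

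The orbit-counting alternative has the same hole. For non-constant $S$ the translation action has no fixed points, so modulo $p$ it only tells you the signed count is $\equiv0$. The refinement by stabilizer type, with control of signs, is exactly what is left unproved. As it stands this is a strategy rather than a proof, and it has no mechanism for the noncyclic case, which is precisely what the theorem adds beyond Thomas.

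The missing idea is to eliminate the roots of unity altogether, which is possible because $c(S)$ is a rational integer. Label the terms $S=g_1\cdots g_n$ and let $\alpha_g$ be the multiplicity of $g$ in $S$. Then $\bigl(\prod_g\alpha_g!\bigr)c(S)$ equals the sum over bijections $\theta:\{1,\dots,n\}\to\widehat G$ of $\prod_i\theta(i)(g_i)$. Impose bijectivity by M\"obius inversion on the lattice of set partitions, $\mu(\hat0,\pi)=(-1)^{n-|\pi|}\prod_{B\in\pi}(|B|-1)!$. Then evaluate each block by orthogonality, $\sum_{\chi}\chi\bigl(\sum_{i\in B}g_i\bigr)\in\{0,n\}$. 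This gives the exact integer formula
\[
\Bigl(\prod_g\alpha_g!\Bigr)c(S)=\sum_{\pi\in\Pi_0(S)}(-1)^{n-|\pi|}n^{|\pi|}\prod_{B\in\pi}(|B|-1)!,
\]
summed over set partitions into zero-sum blocks. Here $G$ enters only through which partitions are zero-sum.

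For $n=p^r$, the one-block partition (present since $S$ is zero-sum) has $p$-adic valuation $r+v_p((n-1)!)$. Any $k$-block partition with $k\ge2$ has strictly larger valuation. Indeed,
\[
\frac{(n-1)!}{\prod_j(b_j-1)!}=(n-1)\cdots(n-k+1)\binom{n-k}{b_1-1,\dots,b_k-1},
\]
and its valuation is at most $v_p((k-1)!)+(k-1)(r-1)<(k-1)r$. This uses $v_p(n-i)=v_p(i)$ for $0<i<n$, and Kummer's carry bound for $n-k<p^r$. So every zero-sum $S$ over every abelian $p$-group has a unique leading term, with no subgroup chain, no induction and no tie-breaking needed.
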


Recently, Wang, Zhang, and Zhang \cite{WangZhangZhang2026} initiated a systematic study of general immanants of $\mathcal M_G$ and formulated several conjectures for cyclic groups:
\begin{conjecture}\label{mainconj}
\begin{enumerate}
    \item For any $n \ge 3$, if $n$ is odd, then $\mathcal I_{(n-1,1)}(C_n)= \mathcal I_{(2,1^{n-2})}(C_n)=0$.
    \item For any $n \ge 2$, if $n\equiv 2\pmod 4$, then $\mathcal I_{(n-1,1)}(C_n)=\mathcal P(C_n)$ and $\mathcal I_{(2,1^{n-2})}(C_n)=\mathcal D(C_n)$.
    \item For any $n \ge 7$, if $n$ is odd, then $\mathcal I_{(n-2,1,1)}(C_n)= \mathcal P(C_n)$.
\end{enumerate}
\end{conjecture}

Our second main result proves (1) and (2) of Conjecture \ref{mainconj} in a generalized version.

\begin{theorem}\label{near-hookimm}
    Let $G$ be a finite abelian group.
\begin{enumerate}
    \item If $|G|$ is odd, then
\[
        \mathcal I_{(n-1,1)}(G)= \mathcal I_{(2,1^{n-2})}(G)=0.
\]
    \item If $|G|\equiv 2\pmod 4$, then
\[
        \mathcal I_{(n-1,1)}(G)=\mathcal P(G)\quad
        \text{and}\quad
        \mathcal I_{(2,1^{n-2})}(G)=\mathcal D(G).
\]
\end{enumerate}

\end{theorem}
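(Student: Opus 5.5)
The plan is to use $\chi^{(n-1,1)}(\sigma)=\Fix(\sigma)-1$ and $\chi^{(2,1^{n-2})}(\sigma)=\sgn(\sigma)(\Fix(\sigma)-1)$, and to compare, monomial by monomial, the fixed-point-weighted sum with the unweighted one using a translation action of $G$ on each Hall fiber.

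\textbf{Setup.} For a monomial $x^{S}=\prod_{a}x_{a+\sigma(a)}$, let $F_S\subseteq\Sym(G)$ be its fiber, i.e. the set of $\sigma$ producing $x^S$. The coefficient of $x^S$ is then
\[
\sum_{\sigma\in F_S}(\Fix(\sigma)-1)\ \text{in}\ \imm_{(n-1,1)},\qquad
\sum_{\sigma\in F_S}\sgn(\sigma)(\Fix(\sigma)-1)\ \text{in}\ \imm_{(2,1^{n-2})}.
\]
For $t\in G$ let $\tau_t(a)=a+t$ and define $\sigma_t=\tau_t\sigma\tau_t$, so $\sigma_t(a)=\sigma(a+t)+t$. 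Then
\[
a+\sigma_t(a)=(a+t)+\sigma(a+t),
\]
so $\sigma\mapsto\sigma_t$ permutes $F_S$; this is a bijection of $F_S$ for each $t$. Moreover $\sgn(\sigma_t)=\sgn(\tau_t)^2\sgn(\sigma)=\sgn(\sigma)$.

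\textbf{Fixed-point count.} The point $a$ is fixed by $\sigma_t$ iff $b=a+t$ satisfies $\sigma(b)-b=-2t$. Hence
\[
\sum_{t\in G}\Fix(\sigma_t)=\#\{(b,t):\sigma(b)-b=-2t\}.
\]
Averaging over $t$ (each $\sigma\mapsto\sigma_t$ is a bijection of $F_S$ preserving sign) gives
\[
\sum_{\sigma\in F_S}\epsilon(\sigma)\Fix(\sigma)=\frac1n\sum_{\sigma\in F_S}\epsilon(\sigma)\sum_{t}\Fix(\sigma_t),
\]
where $\epsilon$ is either the trivial character or $\sgn$.

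\textbf{$|G|$ odd.} Doubling is a bijection of $G$, so each $b$ has exactly one $t$ with $-2t=\sigma(b)-b$. Thus $\sum_t\Fix(\sigma_t)=n$, and so $\sum_{F_S}\epsilon\Fix=\sum_{F_S}\epsilon$. Every coefficient therefore vanishes, which proves part (1) for both immanants.

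\textbf{$|G|\equiv2\pmod 4$.} Here $H=2G$ has index $2$, and each element of $H$ is $-2t$ for exactly two values of $t$. So
\[
\sum_t\Fix(\sigma_t)=2\,\#\{b:\sigma(b)-b\in H\}.
\]
Since $\sigma(b)-b=(b+\sigma(b))-2b$, the condition $\sigma(b)-b\in H$ is equivalent to $b+\sigma(b)\in H$. The count is therefore $k_S$, the number of factors of $x^S$ (with multiplicity) whose index lies in $H$, which depends only on $S$. Consequently the coefficient of $x^S$ in $\imm_{(n-1,1)}$ (resp. $\imm_{(2,1^{n-2})}$) equals $\frac{2k_S-n}{n}$ times its coefficient in $\per$ (resp. $\det$).

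It remains to show $2k_S\neq n$ for every monomial that occurs. By \Cref{Hal}, $S$ is a zero-sum sequence of length $n$. Its $n-k_S$ terms outside $H$ are all congruent to the nontrivial coset of $G/H\cong C_2$, so their sum is $0$ in $G/H$ only if $n-k_S$ is even. Since $n/2$ is odd, $k_S\neq n/2$, so the factor $\frac{2k_S-n}{n}$ is nonzero. Hence the support of $\imm_{(n-1,1)}$ equals that of $\per$, and the support of $\imm_{(2,1^{n-2})}$ equals that of $\det$, which proves part (2).

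The one step I would double-check is the averaging identity, i.e. that each $\sigma\mapsto\sigma_t$ is a sign-preserving bijection of $F_S$ even when the action has stabilizers. Summing over all $t$ rather than over orbits avoids any stabilizer issue.
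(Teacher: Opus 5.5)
Your proposal is correct and is essentially the paper's argument. Both use the sign-preserving conjugation-by-translation action on the Hall fiber to write the coefficient as $\frac{\sum_a r(a)\la_a-n}{n}$ times $p_m$ (resp.\ $d_m$), and both finish with the parity argument in $G/2G\cong C_2$ via Hall's theorem. The paper first proves the equidistribution lemma $|\mathcal P_a(m)|=\frac{\la_a}{n}p_m$ and then sums over fixed points, whereas you average $\Fix$ over the action directly; this is the same double count arranged more compactly.
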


Based on some numerical experiments via SageMath, we observed several interesting phenomena concerning $\imm_{\lambda}(\mathcal M_G)$ for general $\lambda$ and we shall prove the following neat result, which shows that two seemingly unrelated immanants (with respect to non-conjugate partitions) are identical.

\begin{theorem}\label{immtwins}
    Let $G$ be a finite abelian group with $|G|$ odd and $|G|\ge 7$, then
\[
        \imm_{(4,1^{n-4})}(\mathcal M_G)=\imm_{(2,2,2,1^{n-6})}(\mathcal M_G).
\]
\end{theorem}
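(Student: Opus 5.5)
The plan is to reduce the identity to a vanishing statement about signed fiber sums, and then prove that vanishing with an involution. For a monomial $x^S$, with $S$ a multiset over $G$, let $F_S=\{\sigma\in\Sym(G): \{\!\{a+\sigma(a)\}\!\}=S\}$ be its fiber. The coefficient of $x^S$ in $\imm_\lambda(\mathcal M_G)$ is $\sum_{\sigma\in F_S}\chi^\lambda(\sigma)$. So it suffices to show, for every $S$, that
\[
\sum_{\sigma\in F_S}\bigl(\chi^{(4,1^{n-4})}(\sigma)-\chi^{(2,2,2,1^{n-6})}(\sigma)\bigr)=0 .
\]

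\textbf{Step 1: express the character difference through cycle counts.} Use $\chi^{\lambda}=\sgn\cdot\chi^{\lambda'}$, where $(4,1^{n-4})'=(n-3,1^3)$ and $(2,2,2,1^{n-6})'=(n-3,3)$. Both conjugates have first row $n-3$. Hence, by the standard expansion in permutation characters on ordered or unordered small subsets (Young's rule plus inclusion–exclusion), each is a polynomial in $c_1(\sigma),c_2(\sigma),c_3(\sigma)$, the numbers of $1$-, $2$- and $3$-cycles; the hypothesis $n\ge 7$ guarantees these formulas are valid. I will write the difference as a $\mathbb Z$-combination of the counting functions $\binom{c_1}{k}c_2^{\,j}$ and $c_3$ of low degree, and expect substantial cancellation, in particular of the top term $\binom{c_1}{3}$. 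The identity becomes
\[
\sum_{\sigma\in F_S}\sgn(\sigma)\,P(c_1,c_2,c_3)(\sigma)=0
\]
for an explicit polynomial $P$.

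\textbf{Step 2: expand into marked structures and reduce to fixed-point vanishing.} Expand each term of $P$ as a count of pairs $(\sigma,T)$, where $T$ is a small marked configuration: a set of fixed points, some $2$-cycles, a $3$-cycle. Fixing $T$ and its contribution to $S$ reduces each sum to a signed sum over permutations of $G\setminus\operatorname{supp}T$ with a prescribed multiset of sums. Since $|G|$ is odd, $2$ is invertible, so a fixed point $a$ is determined by its value $s=2a$. Likewise a $2$-cycle $(a\ b)$ contributes $a+b$ twice, and a $3$-cycle contributes three prescribed sums from which the cycle is recovered. Then I will prove the basic vanishing lemma that underlies part (1) of \Cref{near-hookimm} (which gives $\sum_{F_S}\sgn(\sigma)(c_1-1)=0$ for odd $n$), in a marked and relative form. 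The tool is a sign-reversing involution on $F_S$ that moves a fixed point into a $2$-cycle or conversely. For instance, replace $\sigma$ by $\sigma\circ(a\ b)$ or by a translate-conjugate $x\mapsto\sigma(x+t)-t$ combined with $x\mapsto -x$; both preserve the sum multiset up to the symmetries used in that proof.

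\textbf{Step 3: combine and verify.} Feeding these relative vanishing identities into the expansion of $P$ should kill every term. The main obstacle is Step 2 in the presence of the $c_2$ and $c_3$ terms. An involution must change $c_1$, $c_2$, $c_3$ in a way compatible with the specific coefficients of $P$, rather than just flipping the sign. My first attempt is to check that $P$ factors through the combinations $(c_1-1)$, $\binom{c_1}{2}-c_2$ and similar. These are exactly $\sgn\cdot\chi^{(n-1,1)}$, $\sgn\cdot\chi^{(n-2,2)}$, and so on, which are the statistics for which a single "swap two sums" involution works. If that fails, I will fall back to a generating-function argument: compute the full fiber sums $\sum_{F_S}\sgn(\sigma)\,t_1^{c_1}t_2^{c_2}t_3^{c_3}$ via the group determinant factorization over characters of $G$, and read off the vanishing of the needed coefficients.
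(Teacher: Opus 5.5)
Your Step 1 is correct and agrees with the paper. After passing to conjugate partitions the $c_3$ terms cancel, and
\[
\chi^{(4,1^{n-4})}(\sigma)-\chi^{(2,2,2,1^{n-6})}(\sigma)=\sgn(\sigma)(c_1(\sigma)-1)(1-2c_2(\sigma)).
\]
By part (1) of Theorem~\ref{near-hookimm}, everything therefore reduces to showing $\sum_{\sigma\in F_S}\sgn(\sigma)(c_1(\sigma)-1)c_2(\sigma)=0$. Here the proposal has a genuine gap: Step 2 offers no mechanism that works.

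The maps you suggest do not act on $F_S$. If $a$ is a fixed point, $\sigma\circ(a\ b)$ replaces the sums $\{2a,\,b+\sigma(b)\}$ by $\{a+\sigma(b),\,a+b\}$, which is never the same multiset. The map $x\mapsto\sigma(x+t)-t$ shifts every sum by $-2t$, and conjugating by $x\mapsto -x$ sends $S$ to $-S$; being conjugations, these also preserve sign, so they cannot be sign-reversing. The symmetry the paper actually uses, $\gamma\star\sigma=\tau_{-\gamma}\sigma\tau_{-\gamma}$, preserves $F_S$ and sign but not cycle type. Part (1) is proved by double counting over this action, not by an involution. That averaging does not survive deleting $\operatorname{supp}T$, because translations move the deleted points.

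More decisively, the relative vanishing you want is false for an individual marked $2$-cycle. Take $G=C_7$ and the monomial $x_1^3x_3x_4x_5x_6$. The permutations in its fiber containing $(0\ 1)$ give $\sum\sgn(\sigma)(c_1(\sigma)-1)=-2$, those containing $(3\ 5)$ give $+2$, and those containing $(2\ 6)$ give $0$. The cancellation happens only across different $2$-cycles with the same sum $j+k$, so it is a global identity, not something a local pairing produces.

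The missing idea is the paper's linear-algebra route. Expanding marked structures gives
\[
\imm_{(4,1^{n-4})}(A)-\imm_{(2,2,2,1^{n-6})}(A)=F_1(A)-\det A+2(T_{12}(A)-T_2(A)),
\]
in terms of complementary principal minors. For $A=\mathcal M_G$ these minors are evaluated by Jacobi's complementary minor theorem, using that $\mathcal M_G^{-1}=(y_{a+b})$ is again group-Hankel. The resulting sums, such as $\sum_{i,j,k}x_{2i}x_{j+k}^2y_{i+j}y_{i+k}y_{j+k}$, collapse via the convolution identities $\sum_r x_ry_{r+s}=\delta_{s,0}$, after reindexing bijections that require $2$ to be invertible in $G$.

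Your generating-function fallback does not replace this. The weight $t_1^{c_1}$ turns $\mathcal M_G$ into $\mathcal M_G+(t_1-1)\operatorname{diag}(x_{2u})$, which is no longer a group matrix. The weight $t_2^{c_2}$ does not give a determinant at all. So the factorization over $\widehat G$ is unavailable. Moreover, the derivatives at $t=1$ you would need are exactly $F_1$, $T_2$ and $T_{12}$, which is precisely where the Jacobi/inverse-matrix computation is required.
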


The paper is organized as follows. In Section~\ref{sec:detper}, we study $\mathcal P(G)$ and $\mathcal D(G)$. In Section~\ref{sec:hall}, we consider $\imm_{(n-1,1)}(\mathcal M_G)$ and $\imm_{(2,1^{n-2})}(\mathcal M_G)$. In Section~\ref{sec:twin}, we prove Theorem \ref{immtwins}.  We end this paper with some concluding remarks.

\section{Determinant and permanent supports}\label{sec:detper}

In this section, we consider the relation between $\mathcal D(G)$ and $\mathcal P(G)$, and provide the proof of Theorem \ref{PGDG}.

It is convenient to use the Toeplitz group matrix
\[
        A_G=(x_{a-b})_{a,b\in G}.
\]
The matrices $A_G$ and $\mathcal M_G$ differ by the column permutation $b\mapsto -b$, so they have the same determinant and permanent supports.

Let $S=g_1\cdots g_n$ be a sequence over $G$ of length $n$.  If $\alpha_g$ is the multiplicity of $g$ in $S$, define the labelled determinant coefficient
\[
        \widetilde c_G(S)=\left(\prod_{g\in G}\alpha_g!\right)[x_{g_1}\cdots x_{g_n}]\det(A_G).
\]
Let $\Pi_0(S)$ be the set of set partitions $\pi$ of $\{1,\ldots,n\}$ such that every block is zero-sum:
\[
        \sum_{i\in B}g_i=0\qquad(B\in\pi).
\]

\begin{theorem}\label{thm:partition-coeff}
For every sequence $S$ over $G$ of length $n$,
\begin{equation}\label{eq:partition-coeff}
        \widetilde c_G(S)=\sum_{\pi\in\Pi_0(S)}(-1)^{n-|\pi|}n^{|\pi|}
        \prod_{B\in\pi}(|B|-1)!.
\end{equation}
\end{theorem}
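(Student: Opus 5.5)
We compute the coefficient through the characters of $G$. Since $G$ is abelian, $A_G$ is diagonalized by the characters $\psi\in\widehat G$, so
\[
  \det(A_G)=\prod_{\psi\in\widehat G}\Bigl(\sum_{g\in G}\psi(g)x_g\Bigr).
\]
Take logarithms formally. Write $\det(A_G)=\exp\operatorname{tr}\log A_G$, or better, use $\det(I-tB)=\exp\bigl(-\sum_{k\ge1}\frac{t^k}{k}\operatorname{tr}B^k\bigr)$ with $B=A_G$ viewed in the group algebra. Then
\[
  \operatorname{tr}(A_G^k)=\sum_{\psi}\Bigl(\sum_g\psi(g)x_g\Bigr)^k=n\sum_{g_1+\cdots+g_k=0}x_{g_1}\cdots x_{g_k},
\]
where the sum is over ordered $k$-tuples, by orthogonality $\sum_\psi\psi(h)=n[h=0]$. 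Here is the key identity: the power sum $p_k$ of the eigenvalues equals $n$ times the sum over ordered zero-sum $k$-tuples.

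Next, express $\det$ as the top elementary symmetric function of the $n$ eigenvalues, using Newton's identity in exponential form:
\[
  e_n=\sum_{\mu\vdash n}\frac{(-1)^{n-\ell(\mu)}}{z_\mu}p_\mu .
\]
Equivalently, $e_n=[t^n]\exp\bigl(\sum_k(-1)^{k-1}p_kt^k/k\bigr)$. Expanding the exponential as a sum over set partitions of the labels $\{1,\dots,n\}$ gives
\[
  n!\,e_n=\sum_{\pi\in\Pi_n}\prod_{B\in\pi}(-1)^{|B|-1}(|B|-1)!\,p_{|B|}.
\]
Each block contributes $(|B|-1)!$, which is the number of cyclic orders on the block; this is the standard exponential formula. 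Substituting $p_{|B|}=n\sum_{\text{zero-sum tuples}}$, every block gets a factor $n$ and an ordered zero-sum tuple of variables. The total sign is $(-1)^{\sum(|B|-1)}=(-1)^{n-|\pi|}$.

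Finally, extract the coefficient and match normalizations. Assigning to position $i\in B$ a group element $g_i$, with the blockwise zero-sum condition, yields a labelled word $(g_1,\dots,g_n)$. The left side $n!\,e_n$ expands as $\sum$ over labelled words of the monomial. Fix the multiset $S$. The number of labelled words with content $S$ is $n!/\prod\alpha_g!$. By symmetry under permuting positions, the contribution is the same for each word, since relabelling positions permutes the set partitions. Hence $[x^S]\det\cdot n!$ equals $\frac{n!}{\prod\alpha_g!}$ times the right side of \eqref{eq:partition-coeff} evaluated at a fixed labelling. This gives $\widetilde c_G(S)$ as stated.

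The main obstacle is this bookkeeping step: one has to verify carefully that summing over words of content $S$ and dividing by the number of such words produces exactly the factor $\prod\alpha_g!$. One also has to check that the ambiguity of whether $\det(A_G)$ or $\det(\mathcal M_G)$ carries a sign does not matter. For $A_G$ the eigenvalue factorization is exact, with no sign from column reversal, so using $A_G$ avoids that issue.
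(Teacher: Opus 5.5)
Your proof is correct, but it is organized differently from the paper's.

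The paper fixes the labelled sequence $(g_1,\dots,g_n)$ from the outset. It writes $\widetilde c_G(S)$ as a sum over bijections $\theta\colon\{1,\dots,n\}\to\widehat G$ of $\prod_i\theta(i)(g_i)$; the factor $\prod_g\alpha_g!$ is exactly the number of bijections inducing a given term of the expanded product. It then removes the bijectivity constraint by M\"obius inversion on $\Pi_n$, applying orthogonality blockwise to $\sum_{\chi}\chi\bigl(\sum_{i\in B}g_i\bigr)$.

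You instead go through symmetric functions of the eigenvalues. You use $\det=e_n$ and the expansion of $n!\,e_n$ over $\Pi_n$ with weights $(-1)^{|B|-1}(|B|-1)!$. These are the same numbers $\mu(\hat0,\pi)$, so the combinatorial core is shared. Your separate key lemma is that $p_k$ of the eigenvalues is $n$ times the generating function of ordered zero-sum $k$-tuples.

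The cost of your route is the final symmetrization. You need that relabelling positions carries $\Pi_0(w)$ onto $\Pi_0$ of the permuted word while preserving block sizes, so the weight depends only on the content of $w$. Dividing by the $n!/\prod_g\alpha_g!$ words of content $S$ then yields exactly $\widetilde c_G(S)$. You carry this bookkeeping out correctly.

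The gain is that the only group-specific input is
$\operatorname{tr}(A_G^k)=n\sum_{h_1+\dots+h_k=0}x_{h_1}\cdots x_{h_k}$.
This can even be verified without characters, by counting closed walks $a_1\to a_2\to\cdots\to a_k\to a_1$ with steps $h_i=a_i-a_{i+1}$. Meanwhile $\det A=\sum_{\mu\vdash n}\epsilon_\mu z_\mu^{-1}\prod_i\operatorname{tr}(A^{\mu_i})$ holds for every matrix. The paper's route avoids both the averaging over words and the detour through traces, but it needs the full linear factorization of the group determinant.

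Your opening remark about $\exp\operatorname{tr}\log A_G$ is superfluous; the Newton identity is all you actually use.
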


\begin{proof}
The abelian group determinant factorization gives
\[
        \det(A_G)=\prod_{\chi\in\widehat G}\left(\sum_{g\in G}\chi(g)x_g\right).
\]
Therefore
\[
        \widetilde c_G(S)=\sum_{\theta:\{1,
        \ldots,n\}\to \widehat G\,\text{bijection}}
        \prod_{i=1}^n\theta(i)(g_i).
\]
Impose bijectivity by M\"obius inversion on the partition lattice $\Pi_n$.  The M\"obius function from the discrete partition $\hat 0$ (the minimal element) to $\pi$ is
\[
        \mu(\hat0,\pi)=(-1)^{n-|\pi|}\prod_{B\in\pi}(|B|-1)!.
\]
Thus
\[
\widetilde c_G(S)=\sum_{\pi\in\Pi_n}\mu(\hat0,\pi)
        \prod_{B\in\pi}\left(\sum_{\chi\in\widehat G}\chi\left(\sum_{i\in B}g_i\right)\right).
\]
By the orthogonality of characters, the inner sum is $n$ if the block sum is zero and $0$ otherwise.  Hence precisely the partitions in $\Pi_0(S)$ survive, giving \eqref{eq:partition-coeff}.
\end{proof}

\begin{proof}[Proof of Theorem \ref{PGDG}.]
Let $n=|G|=p^r$ and let $S=(g_1,\ldots,g_n)$ be a zero-sum sequence.  We show $\widetilde c_G(S)\ne0$.  In \eqref{eq:partition-coeff}, the one-block partition belongs to $\Pi_0(S)$ and contributes
\[
        T_1=(-1)^{n-1}n(n-1)!.
\]
Its $p$-adic valuation is $r+v_p((n-1)!)$.

Consider another zero-sum partition with $k\ge2$ blocks of sizes $b_1,
\ldots,b_k$.  Its term has valuation
\[
        kr+\sum_{j=1}^k v_p((b_j-1)!).
\]
It suffices to prove
\begin{equation}\label{eq:padic-ineq}
        v_p\left(\frac{(n-1)!}{\prod_{j=1}^k(b_j-1)!}\right)<(k-1)r.
\end{equation}
Put $m_j=b_j-1$, so $\sum_jm_j=n-k$.  Then
\[
\frac{(n-1)!}{\prod_j(b_j-1)!}
=(n-1)(n-2)\cdots(n-k+1)\binom{n-k}{m_1,\ldots,m_k}.
\]
Since $n=p^r$ and $1\le i\le k-1<n$, we have $v_p(n-i)=v_p(i)$, so the first factor has valuation $v_p((k-1)!)$.  Also $n-k<n=p^r$, and repeated use of $v_p\binom{M}{d}<r$ for $M<p^r$ gives
\[
        v_p\binom{n-k}{m_1,\ldots,m_k}\le(k-1)(r-1).
\]
Therefore the valuation in \eqref{eq:padic-ineq} is at most
\[
        v_p((k-1)!)+(k-1)(r-1)<(k-1)r.
\]
Thus the one-block term is the unique term of minimal $p$-adic valuation in \eqref{eq:partition-coeff}.  The sum cannot vanish.
\end{proof}

\section{On $\imm_{(n-1,1)}(\mathcal M_G)$ and $\imm_{(2,1^{n-2})}(\mathcal M_G)$}\label{sec:hall}

In this section, we consider the immanants $\imm_{(n-1,1)}(\mathcal M_G)$ and $\imm_{(2,1^{n-2})}(\mathcal M_G)$, and then prove Theorem \ref{near-hookimm}.

For a monomial $m=\prod_{a\in G}x_a^{\lambda_a}$ of degree $n$, define
\[
\mathcal{P}(m):=\Bigl\{\sigma\in \Sym(G):\prod_{u\in G}x_{u+\sigma(u)}=m\Bigr\}.
\]
Then $p_m=|\mathcal{P}(m)|$, while
\[
d_m=\sum_{\sigma\in \mathcal{P}(m)}\sgn(\sigma).
\]
For $a\in G$, set
\[
\mathcal{P}_a(m):=\{\sigma\in \mathcal{P}(m):\sigma(0)=a\}.
\]

The proof of the following lemma employ ideas from \cite{WangZhangZhang2026}.

\begin{lemma}\label{lem:translation-action}
For each $\gamma\in G$, define a permutation action on $\Sym(G)$ by
\[
(\gamma\star \sigma)(u)=\sigma(u-\gamma)-\gamma.
\]
Then $\sgn(\gamma\star\sigma)=\sgn(\sigma)$ and
\[
\prod_{u\in G}x_{u+(\gamma\star\sigma)(u)}
=
\prod_{u\in G}x_{u+\sigma(u)}.
\]
In particular, $\gamma\star\sigma\in \mathcal{P}(m)$ whenever $\sigma\in \mathcal{P}(m)$.
\end{lemma}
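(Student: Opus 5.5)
The plan is to write $\gamma\star\sigma$ as a conjugate of $\sigma$ composed with a translation, and then verify both claims directly. Let $t_\gamma\colon G\to G$ denote the translation $u\mapsto u+\gamma$. Then by definition
\[
\gamma\star\sigma = t_{-\gamma}\circ\sigma\circ t_{-\gamma}
= t_{-\gamma}\circ\bigl(\sigma\circ t_{-\gamma}\circ t_{\gamma}\bigr)\circ t_{-\gamma},
\]
or more simply $\gamma\star\sigma=t_{\gamma}^{-1}\circ\sigma\circ t_{\gamma}^{-1}$. Since $t_\gamma^{-1}\in\Sym(G)$ and the sign is a homomorphism, we get
\[
\sgn(\gamma\star\sigma)=\sgn(t_\gamma)^{-2}\sgn(\sigma)=\sgn(\sigma).
\]
This is the first claim, and it needs no parity information about $t_\gamma$ because the translation appears twice. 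Note also that $\gamma\star\sigma$ is a bijection, being a composition of bijections, so the map $\sigma\mapsto\gamma\star\sigma$ sends $\Sym(G)$ to itself. It is moreover an action: $\gamma\star(\delta\star\sigma)(u)=\sigma(u-\gamma-\delta)-\gamma-\delta=((\gamma+\delta)\star\sigma)(u)$. Only the fact that each map is a permutation of $\Sym(G)$ is actually needed.

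For the monomial identity, I compute the index attached to each row. For $u\in G$,
\[
u+(\gamma\star\sigma)(u)=u+\sigma(u-\gamma)-\gamma=(u-\gamma)+\sigma(u-\gamma).
\]
Substituting $v=u-\gamma$, which is a bijection of $G$ onto itself, reindexes the product:
\[
\prod_{u\in G}x_{u+(\gamma\star\sigma)(u)}=\prod_{v\in G}x_{v+\sigma(v)}.
\]
The variables commute, so reordering the factors is harmless. It follows at once that if $\sigma\in\mathcal P(m)$, then $\gamma\star\sigma$ produces the same monomial $m$, and hence $\gamma\star\sigma\in\mathcal P(m)$.

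There is no real obstacle here; the only point requiring care is to write $\gamma\star\sigma$ as the composition $t_\gamma^{-1}\sigma t_\gamma^{-1}$, so that the sign is visibly unchanged without any case analysis on the order of $\gamma$.
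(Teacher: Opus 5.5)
Your proposal is correct and follows the paper's proof: you write $\gamma\star\sigma=t_{-\gamma}\circ\sigma\circ t_{-\gamma}$, so the translation's sign appears squared, and you reindex the product by $v=u-\gamma$. The redundant middle expression in your first display and the remark that $\star$ is a group action are harmless but not needed.
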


\begin{proof}
Let $\tau_{\gamma}(u)=u+\gamma$ be translation by $\gamma$. Then
\[
\gamma\star\sigma=\tau_{-\gamma}\circ \sigma\circ \tau_{-\gamma},
\]
so
\[
\sgn(\gamma\star\sigma)=\sgn(\tau_{-\gamma})^2\sgn(\sigma)=\sgn(\sigma).
\]
Also,
\[
\prod_{u\in G}x_{u+(\gamma\star\sigma)(u)}
=
\prod_{u\in G}x_{u+\sigma(u-\gamma)-\gamma}
=
\prod_{v\in G}x_{v+\sigma(v)},
\]
after the change of variables $v=u-\gamma$.
\end{proof}

\begin{lemma}\label{lem:Pa-count}
For every $a\in G$,
\[
|\mathcal{P}_a(m)|=\frac{\la_a}{n}\,p_m,
\]
and
\[
\sum_{\sigma\in \mathcal{P}_a(m)}\sgn(\sigma)=\frac{\la_a}{n}\,d_m.
\]
\end{lemma}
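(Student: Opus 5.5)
The plan is to combine a double counting of pairs $(\sigma,u)$ with the translation action of Lemma~\ref{lem:translation-action}, which lets us move any position $u$ to $0$.

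First, count incidences. For $\sigma\in\mathcal P(m)$, the multiset $\{u+\sigma(u):u\in G\}$ is exactly the multiset of $m$. Hence the number of $u\in G$ with $u+\sigma(u)=a$ is $\la_a$. Summing over $\sigma$ gives
\[
\#\{(\sigma,u):\sigma\in\mathcal P(m),\ u+\sigma(u)=a\}=\la_a\,p_m .
\]
With signs included, the same count gives $\sum_{(\sigma,u)}\sgn(\sigma)=\la_a d_m$.

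Second, show that the inner count does not depend on $u$. For fixed $u$, let
\[
Q_u=\{\sigma\in\mathcal P(m): u+\sigma(u)=a\}.
\]
Note that $Q_0=\mathcal P_a(m)$, since $0+\sigma(0)=a$ means $\sigma(0)=a$. By Lemma~\ref{lem:translation-action}, $\gamma\star\sigma\in\mathcal P(m)$ and $\sgn(\gamma\star\sigma)=\sgn(\sigma)$. We also have
\[
(\gamma\star\sigma)(u+\gamma)+(u+\gamma)=\sigma(u)-\gamma+u+\gamma=u+\sigma(u).
\]
So $\sigma\mapsto\gamma\star\sigma$ maps $Q_u$ into $Q_{u+\gamma}$. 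It is a bijection, because its inverse is $(-\gamma)\star$: one checks that $(\gamma\star(\delta\star\sigma))=(\gamma+\delta)\star\sigma$. Since the map preserves signs, $|Q_u|=|Q_0|$ and $\sum_{Q_u}\sgn=\sum_{Q_0}\sgn$ for every $u$.

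Third, conclude. The double count equals $\sum_{u}|Q_u|=n|\mathcal P_a(m)|$, so $n|\mathcal P_a(m)|=\la_a p_m$. The signed version gives $n\sum_{\mathcal P_a(m)}\sgn(\sigma)=\la_a d_m$ in the same way. No step looks like a real obstacle; the only care needed is checking the action identity and the conservation of $u+\sigma(u)$ under the shift.
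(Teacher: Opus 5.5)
Your argument is correct and is essentially the paper's proof. The paper double counts the same set $X_a(m)=\{(\sigma,u)\in\mathcal P(m)\times G: u+\sigma(u)=a\}$ and uses the bijection $\Phi(\sigma,u)=((-u)\star\sigma,u)$ onto $\mathcal P_a(m)\times G$, which is exactly your fiberwise bijection $(-u)\star\colon Q_u\to Q_0$ assembled over all $u\in G$.
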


\begin{proof}
Consider the set
\[
X_a(m):=\{(\sigma,u)\in \mathcal{P}(m)\times G: u+\sigma(u)=a\}.
\]
If $\sigma\in \mathcal{P}(m)$, then the symbol $x_a$ occurs exactly $\la_a$ times in the product $\prod_{u\in G}x_{u+\sigma(u)}$, so there are exactly $\la_a$ choices of $u$ with $u+\sigma(u)=a$. Hence
\[
|X_a(m)|=\la_a\,|\mathcal{P}(m)|=\la_a p_m.
\]
Now define
\[
\Phi:X_a(m)\longrightarrow \mathcal{P}_a(m)\times G,
\qquad
\Phi(\sigma,u)=((-u)\star\sigma,u).
\]
If $u+\sigma(u)=a$, then
\[
((-u)\star\sigma)(0)=\sigma(u)+u=a,
\]
so $(-u)\star\sigma\in \mathcal{P}_a(m)$. Conversely, given $(\pi,u)\in \mathcal{P}_a(m)\times G$, let $\sigma=u\star\pi$. Then
\[
\sigma(u)=\pi(0)-u=a-u,
\]
so $(\sigma,u)\in X_a(m)$ and $\Phi(\sigma,u)=(\pi,u)$. Thus $\Phi$ is a bijection, and therefore
\[
\la_a p_m=|X_a(m)|=n\,|\mathcal{P}_a(m)|.
\]
This proves the first identity.

For the signed identity, weight every pair $(\sigma,u)\in X_a(m)$ by $\sgn(\sigma)$. Because the action in Lemma~\ref{lem:translation-action} preserves sign, the same bijection gives
\[
\sum_{(\sigma,u)\in X_a(m)}\sgn(\sigma)
=
 n\sum_{\pi\in \mathcal{P}_a(m)}\sgn(\pi).
\]
On the other hand, each $\sigma\in \mathcal{P}(m)$ contributes $\la_a$ times, so the left-hand side equals $\la_a d_m$. The result follows.
\end{proof}

Recall the characters
\begin{equation}\label{eq:hook-chars1}
        \chi^{(n-1,1)}(\sigma)=\Fix(\sigma)-1,
        \qquad
        \chi^{(2,1^{n-2})}(\sigma)=\sgn(\sigma)(\Fix(\sigma)-1).
\end{equation}

\begin{theorem}\label{thm:master-formula}
Let $G$ be a finite abelian group of order $n$, and let
\[
m=\prod_{a\in G}x_a^{\la_a}
\]
be a monomial of degree $n$. Define
\[
r(a):=|\{g\in G:2g=a\}|.
\]
Then
\begin{equation}\label{eq:hook-coefficient}
[m]\imm_{(n-1,1)}(\mathcal M_G)
=
\frac{\sum_{a\in G}r(a)\la_a-n}{n}\,p_m,
\end{equation}
and
\begin{equation}\label{eq:cohook-coefficient}
[m]\imm_{(2,1^{n-2})}(\mathcal M_G)
=
\frac{\sum_{a\in G}r(a)\la_a-n}{n}\,d_m.
\end{equation}
\end{theorem}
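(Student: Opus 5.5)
Using the character formulas \eqref{eq:hook-chars1}, the coefficient of $m$ in $\imm_{(n-1,1)}(\mathcal M_G)$ is
\[
\sum_{\sigma\in\mathcal P(m)}(\Fix(\sigma)-1)=\sum_{\sigma\in\mathcal P(m)}\Fix(\sigma)-p_m ,
\]
and similarly the coefficient in $\imm_{(2,1^{n-2})}(\mathcal M_G)$ is $\sum_{\sigma\in\mathcal P(m)}\sgn(\sigma)\Fix(\sigma)-d_m$. So the whole task reduces to computing the fixed-point counts $\sum_{\sigma\in\mathcal P(m)}\Fix(\sigma)$ and the signed version $\sum_{\sigma}\sgn(\sigma)\Fix(\sigma)$, and showing they equal $\frac{1}{n}\sum_a r(a)\la_a\,p_m$ and $\frac1n\sum_a r(a)\la_a\,d_m$ respectively.

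I would double count pairs $(\sigma,g)$ with $\sigma\in\mathcal P(m)$ and $\sigma(g)=g$. For such a pair the entry used in row $g$ is $x_{g+\sigma(g)}=x_{2g}$, so I would group them according to $a=2g$:
\[
\sum_{\sigma\in\mathcal P(m)}\Fix(\sigma)=\sum_{a\in G}\ \sum_{g:\,2g=a}\bigl|\{\sigma\in\mathcal P(m):\sigma(g)=g\}\bigr|.
\]
The key step is to show that for fixed $g$ with $2g=a$, the number of $\sigma\in\mathcal P(m)$ with $\sigma(g)=g$ equals $|\mathcal P_a(m)|=\frac{\la_a}{n}p_m$, by Lemma~\ref{lem:Pa-count}.

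For this I would use the action of Lemma~\ref{lem:translation-action} with $\gamma=-g$: the map $\sigma\mapsto(-g)\star\sigma$ is a bijection of $\mathcal P(m)$. It satisfies $((-g)\star\sigma)(0)=\sigma(g)+g$, and this equals $2g=a$ exactly when $\sigma(g)=g$. Hence it maps $\{\sigma\in\mathcal P(m):\sigma(g)=g\}$ bijectively onto $\mathcal P_a(m)$. Since the action preserves sign, the same bijection gives the signed count $\frac{\la_a}{n}d_m$.

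Summing over the $r(a)$ elements $g$ with $2g=a$ and then over $a$ gives
\[
\sum_{\sigma\in\mathcal P(m)}\Fix(\sigma)=\frac{1}{n}\sum_a r(a)\la_a\,p_m,
\qquad
\sum_{\sigma\in\mathcal P(m)}\sgn(\sigma)\Fix(\sigma)=\frac{1}{n}\sum_a r(a)\la_a\,d_m .
\]
Subtracting $p_m=\frac{n}{n}p_m$ (respectively $d_m$) yields \eqref{eq:hook-coefficient} and \eqref{eq:cohook-coefficient}.

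The only delicate point is the bijection: checking that $\sigma(g)=g$ corresponds exactly to $\sigma(g)+g=a$ once $g$ is fixed with $2g=a$. This is immediate, so I expect no real obstacle.
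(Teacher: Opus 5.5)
Your proposal is correct and follows the paper's proof essentially step for step. You reduce to the plain and signed fixed-point sums, group fixed points $g$ by $a=2g$, and use the translation action to identify $\{\sigma\in\mathcal P(m):\sigma(g)=g\}$ with $\mathcal P_a(m)$ before invoking Lemma~\ref{lem:Pa-count}. Your explicit choice $\gamma=-g$, with $((-g)\star\sigma)(0)=\sigma(g)+g$, simply spells out the step the paper abbreviates as ``the same translation argument.''
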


\begin{proof}
By (\ref{eq:hook-chars1}),
\[
[m]\imm_{(n-1,1)}(\mathcal M_G)
=
\sum_{\sigma\in \mathcal{P}(m)}(\Fix(\sigma)-1).
\]
Now
\[
\sum_{\sigma\in \mathcal{P}(m)}\Fix(\sigma)
=
\sum_{u\in G}|\{\sigma\in \mathcal{P}(m):\sigma(u)=u\}|.
\]
Fix $u\in G$. The condition $\sigma(u)=u$ is equivalent to $u+\sigma(u)=2u$, and by the same translation argument as in Lemma~\ref{lem:Pa-count}, the number of such permutations equals $|\mathcal{P}_{2u}(m)|$. Hence
\[
\sum_{\sigma\in \mathcal{P}(m)}\Fix(\sigma)
=
\sum_{u\in G}|\mathcal{P}_{2u}(m)|
=
\sum_{a\in G}r(a)|\mathcal{P}_a(m)|.
\]
By Lemma~\ref{lem:Pa-count}, this is
\[
\frac{1}{n}\sum_{a\in G}r(a)\la_a\,p_m.
\]
Subtracting $p_m=|\mathcal{P}(m)|$ gives~\eqref{eq:hook-coefficient}.

For the second identity, by (\ref{eq:hook-chars1}),
\[
[m]\imm_{(2,1^{n-2})}(\mathcal M_G)
=
\sum_{\sigma\in \mathcal{P}(m)}\sgn(\sigma)(\Fix(\sigma)-1).
\]
The same computation as above, but weighted by sign, gives
\[
\sum_{\sigma\in \mathcal{P}(m)}\sgn(\sigma)\Fix(\sigma)
=
\sum_{a\in G}r(a)\sum_{\sigma\in \mathcal{P}_a(m)}\sgn(\sigma)
=
\frac{1}{n}\sum_{a\in G}r(a)\la_a\,d_m,
\]
again by Lemma~\ref{lem:Pa-count}. Subtracting $d_m$ yields~\eqref{eq:cohook-coefficient}.
\end{proof}

\begin{proof}[Proof of Theorem~\ref{near-hookimm}.(1)]
If $n$ is odd, the doubling map on $G$ is bijective, so $r(a)=1$ for every $a\in G$. Since $\sum_{a\in G}\la_a=n$ for every monomial of degree $n$, the scalar factor in~\eqref{eq:hook-coefficient} and~\eqref{eq:cohook-coefficient} is always zero. Hence every coefficient vanishes.
\end{proof}

\begin{proof}[Proof of Theorem~\ref{near-hookimm}.(2)]
Write $n=2m$ with $m$ odd. Since the Sylow $2$-subgroup of $G$ has order $2$, the subgroup $2G$ has index $2$, so $G/2G\cong C_2$. Moreover,
\[
r(a)=
\begin{cases}
2,& a\in 2G,\\
0,& a\notin 2G.
\end{cases}
\]
For a monomial $m=\prod_{a\in G}x_a^{\la_a}$, let
\[
E=\sum_{a\in 2G}\la_a,
\qquad
O=\sum_{a\notin 2G}\la_a.
\]
Then $E+O=n$, and the scalar factor in Theorem~\ref{thm:master-formula} is
\[
\frac{2E-n}{n}=\frac{E-O}{n}.
\]
Assume first that $p_m>0$. By Theorem \ref{Hal}, the multiset corresponding to $m$ is a zero-sum sequence of length $n$ over $G$. Passing to the quotient $G/2G\cong C_2$, we see that the number of terms outside $2G$ must be even; in other words, $O$ is even. But $n/2=m$ is odd, so $E=O=n/2$ is impossible. Hence $E-O\neq 0$, and therefore the factor in~\eqref{eq:hook-coefficient} is nonzero. It follows that
\[
[m]\imm_{(n-1,1)}(\mathcal M_G)\neq 0
\qquad\Longleftrightarrow\qquad
p_m\neq 0.
\]
This proves the first statement.

For the second statement, if $d_m\neq 0$ then certainly $p_m>0$, so the same scalar factor is again nonzero. Therefore~\eqref{eq:cohook-coefficient} implies
\[
[m]\imm_{(2,1^{n-2})}(\mathcal M_G)\neq 0
\qquad\Longleftrightarrow\qquad
d_m\neq 0.
\]
This completes the proof.
\end{proof}

\section{The odd-order immanant twin}\label{sec:twin}

In this section, we consider a general case and prove Theorem \ref{immtwins}.  

For a permutation $\sigma$, let $c_i(\sigma)$ be the number of $i$-cycles of $\sigma$.  It is easy to see that
\begin{equation}\label{eq:h3char}
\chi^{(n-3,1^3)}=\binom{c_1-1}{3}-(c_1-1)c_2+c_3.
\end{equation}
and
\begin{equation}\label{eq:j3char}
\chi^{(n-3,3)}=\binom{c_1}{3}-\binom{c_1}{2}+(c_1-1)c_2+c_3.
\end{equation}
Since $(4,1^{n-4})=(n-3,1^3)'$ and $(2,2,2,1^{n-6})=(n-3,3)'$, subtracting \eqref{eq:j3char} from \eqref{eq:h3char} after sign twist gives
\begin{equation}\label{eq:twin-char-diff}
\chi^{(4,1^{n-4})}(\sigma)-\chi^{(2,2,2,1^{n-6})}(\sigma)
        =\sgn(\sigma)(c_1(\sigma)-1)(1-2c_2(\sigma)).
\end{equation}

Let $A=(a_{ij})$ be an arbitrary $n\times n$ matrix.  Write $A(S|S)$ for the principal matrix obtained by deleting rows and columns indexed by $S$.  Define
\[
        F_1(A)=\sum_i a_{ii}\det A(i|i),
\]
\[
        T_2(A)=\sum_{i<j}a_{ij}a_{ji}\det A(i,j|i,j),
\]
and
\[
        T_{12}(A)=\sum_{\substack{i\notin\{j,k\}\\j<k}}a_{ii}a_{jk}a_{kj}\det A(i,j,k|i,j,k).
\]

\begin{proposition}[principal-minor reduction]\label{prop:twin-reduction}
For every $n\times n$ matrix $A$,
\begin{equation}\label{eq:twin-reduction}
\imm_{(4,1^{n-4})}(A)-\imm_{(2,2,2,1^{n-6})}(A)
        =F_1(A)-\det A+2(T_{12}(A)-T_2(A)).
\end{equation}
\end{proposition}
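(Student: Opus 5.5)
Using \eqref{eq:twin-char-diff}, the left side of \eqref{eq:twin-reduction} becomes
\[
\sum_{\sigma\in\Sym_n}\sgn(\sigma)\bigl(c_1(\sigma)-1\bigr)\bigl(1-2c_2(\sigma)\bigr)\prod_i a_{i\sigma(i)}.
\]
I will expand the weight as $(c_1-1)(1-2c_2)=c_1-1-2c_1c_2+2c_2$. The goal is to identify each of the four resulting signed sums with the matching term on the right. The sum with weight $-1$ is $-\det A$, so three sums remain.

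For $c_1$, write $c_1(\sigma)=\sum_i[\sigma(i)=i]$ and swap the sums. A permutation fixing $i$ is the same as a permutation $\sigma'$ of $[n]\setminus\{i\}$, and $\sgn(\sigma)=\sgn(\sigma')$. So $\sum_\sigma\sgn(\sigma)c_1(\sigma)\prod a_{i\sigma(i)}=\sum_i a_{ii}\det A(i|i)=F_1(A)$.

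For $c_2$, write $c_2=\sum_{i<j}[\sigma \text{ contains the transposition }(i\,j)]$. Such $\sigma$ has the form $(i\,j)\sigma'$ with $\sigma'$ a permutation of the remaining $n-2$ indices, and $\sgn(\sigma)=-\sgn(\sigma')$. The sum is therefore $-\sum_{i<j}a_{ij}a_{ji}\det A(i,j|i,j)=-T_2(A)$, and the term $2c_2$ contributes $-2T_2(A)$.

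For $c_1c_2$, write $c_1c_2=\sum_{i}\sum_{j<k}[\sigma(i)=i][(j\,k)\text{ a cycle of }\sigma]$. The fixed point and the 2-cycle are disjoint, so $i\notin\{j,k\}$ automatically, which matches the index range in $T_{12}$. Again the sign picks up a factor $-1$ from the transposition, giving $\sum\sgn(\sigma)c_1c_2\prod a_{i\sigma(i)}=-T_{12}(A)$. Hence $-2c_1c_2$ contributes $+2T_{12}(A)$.

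Summing the four contributions gives $F_1(A)-\det A+2(T_{12}(A)-T_2(A))$, as claimed.

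The main obstacle is making sure \eqref{eq:twin-char-diff} holds as stated. I would check the sign-twist convention, namely $\chi^{\lambda'}=\sgn\cdot\chi^{\lambda}$, against \eqref{eq:h3char} and \eqref{eq:j3char}. The difference of those two formulas is
\[
\tbinom{c_1-1}{3}-\tbinom{c_1}{3}+\tbinom{c_1}{2}-2(c_1-1)c_2 .
\]
Since $\binom{c_1}{3}-\binom{c_1-1}{3}=\binom{c_1-1}{2}$, and $\binom{c_1}{2}-\binom{c_1-1}{2}=c_1-1$, this simplifies to $(c_1-1)(1-2c_2)$, consistent with \eqref{eq:twin-char-diff}.

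The remaining bookkeeping is routine. The only real care needed is the sign of $\sigma$ when a fixed point or a transposition is removed, and that each fixed point and each 2-cycle is counted exactly once by the indicator decompositions.
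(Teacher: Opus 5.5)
Your proof is correct and follows the paper's argument. You expand the weight $(c_1-1)(1-2c_2)$ from \eqref{eq:twin-char-diff} and identify the signed sums with a marked fixed point or a marked transposition as $F_1$, $-T_2$ and $-T_{12}$, with a factor $-1$ for each removed transposition; your extra check that \eqref{eq:h3char} and \eqref{eq:j3char} combine under the sign twist to give \eqref{eq:twin-char-diff} is also correct, and the paper only asserts that step.
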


\begin{proof}
By \eqref{eq:twin-char-diff}, the difference of immanants is
\[
\sum_{\sigma}\sgn(\sigma)(c_1-1)(1-2c_2)\prod_i a_{i,\sigma(i)}.
\]
The determinant sum with one fixed point marked is $F_1(A)$, so the contribution of $c_1-1$ is $F_1(A)-\det A$.  The determinant sum with one transposition marked has an extra negative sign from the transposition.  Thus the contribution of $(c_1-1)c_2$ is $-T_{12}(A)+T_2(A)$.  Multiplying by $-2$ gives \eqref{eq:twin-reduction}.
\end{proof}

We apply this with $A=\mathcal M_G$.  Work over the rational function field $K=\C(x_g:g\in G)$.  The determinant of $\mathcal M_G$ is not the zero polynomial: specializing $x_0=1$ and $x_g=0$ for $g\ne0$ gives the permutation matrix of $a\mapsto -a$.  Hence $\mathcal M_G$ is invertible over $K$.  Its inverse again has group-Hankel form
\[
        \mathcal M_G^{-1}=(y_{a+b})_{a,b\in G}
\]
for rational functions $y_g$.  Equivalently, the functions $x_g$ and $y_g$ satisfy the convolution equations
\begin{equation}\label{eq:conv}
        \sum_{r\in G}x_r y_{r+s}=\delta_{s,0}\qquad(s\in G).
\end{equation}
This follows, for instance, from diagonalizing the group matrix by the characters of $G$.

Let $\Delta=\det \mathcal M_G$.  Jacobi's complementary minor theorem gives
\begin{equation}\label{eq:jacobi1}
        \det \mathcal M_G(i|i)=\Delta y_{2i},
\end{equation}
\begin{equation}\label{eq:jacobi2}
        \det \mathcal M_G(i,j|i,j)=\Delta (y_{2i}y_{2j}-y_{i+j}^2),
\end{equation}
and
\begin{equation}\label{eq:jacobi3}
        \det \mathcal M_G(i,j,k|i,j,k)=\Delta\,\Gamma(i,j,k),
\end{equation}
where
\begin{align*}
\Gamma(i,j,k)={}&y_{2i}y_{2j}y_{2k}+2y_{i+j}y_{i+k}y_{j+k} \\
&-y_{2i}y_{j+k}^2-y_{2j}y_{i+k}^2-y_{2k}y_{i+j}^2.
\end{align*}

\begin{lemma}\label{lem:F1-det}
Let $G$ be a finite abelian group of odd order, then $F_1(\mathcal M_G)=\det \mathcal M_G$.
\end{lemma}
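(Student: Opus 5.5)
Using \eqref{eq:jacobi1}, with $\mathcal M_G$ indexed by $G$ and diagonal entries $x_{2i}$, we can write
\[
F_1(\mathcal M_G)=\sum_{i\in G}x_{2i}\det\mathcal M_G(i|i)=\Delta\sum_{i\in G}x_{2i}\,y_{2i}.
\]
Since $|G|$ is odd, the doubling map $i\mapsto 2i$ is a bijection of $G$. Reindexing by $r=2i$ therefore gives
\[
F_1(\mathcal M_G)=\Delta\sum_{r\in G}x_r y_r .
\]
The convolution equations \eqref{eq:conv} with $s=0$ state exactly that $\sum_{r\in G}x_r y_r=1$. Hence $F_1(\mathcal M_G)=\Delta=\det\mathcal M_G$. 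This identity holds in $K$, and both sides are polynomials, so it holds as a polynomial identity.

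The only points needing care are the following.

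\begin{enumerate}[label=(\roman*)]
\item We must justify \eqref{eq:jacobi1}. Jacobi's theorem for a $1\times1$ complementary minor gives $\det A(i|i)=\det A\cdot(A^{-1})_{ii}$. Here $(\mathcal M_G^{-1})_{ii}=y_{i+i}=y_{2i}$ by the group-Hankel form of the inverse. So this is just the cofactor formula for the diagonal entries of the inverse.

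\item The group-Hankel form of $\mathcal M_G^{-1}$ together with \eqref{eq:conv} is taken as given from the preceding discussion. Concretely, the $(0,0)$ entry of $\mathcal M_G\mathcal M_G^{-1}=I$ is $\sum_r x_{0+r}y_{r+0}=1$, which is the $s=0$ case of \eqref{eq:conv}.
\end{enumerate}

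I expect no real obstacle here. The only substantive input is the odd-order hypothesis, which makes the reindexing $i\mapsto 2i$ bijective.
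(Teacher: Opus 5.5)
Your proposal is correct and matches the paper's proof: apply \eqref{eq:jacobi1} to get $F_1(\mathcal M_G)=\Delta\sum_{i}x_{2i}y_{2i}$, reindex through the bijective doubling map, and use the $s=0$ case of \eqref{eq:conv}. Your added remarks justifying \eqref{eq:jacobi1} via the cofactor formula, and verifying the $s=0$ convolution identity from the $(0,0)$ entry of $\mathcal M_G\mathcal M_G^{-1}=I$, are correct details that the paper leaves implicit.
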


\begin{proof}
Using \eqref{eq:jacobi1},
\[
        F_1(\mathcal M_G)=\Delta\sum_i x_{2i}y_{2i}=\Delta\sum_r x_ry_r=\Delta,
\]
because doubling is a bijection and \eqref{eq:conv} with $s=0$ gives $\sum_rx_ry_r=1$.
\end{proof}

\begin{lemma}\label{lem:T12-T2}
If $G$ has odd order, then $T_{12}(\mathcal M_G)=T_2(\mathcal M_G)$.
\end{lemma}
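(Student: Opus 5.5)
The plan is to reduce both $T_{12}(\mathcal M_G)$ and $T_2(\mathcal M_G)$ to expressions in the $x_g$ and $y_g$ using \eqref{eq:jacobi2} and \eqref{eq:jacobi3}, then show the difference vanishes. I expect the final cancellation (step three) to be the main obstacle. Put $A=\mathcal M_G$, so $a_{ii}=x_{2i}$ and $a_{jk}a_{kj}=x_{j+k}^2$.

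\emph{Step one: extend the sum over $i$.} I first rewrite
\[
T_{12}(\mathcal M_G)=\Delta\sum_{j<k}x_{j+k}^2\sum_{i\notin\{j,k\}}x_{2i}\Gamma(i,j,k).
\]
The quantity $\Gamma(i,j,k)$ is the principal $3\times3$ minor of $\mathcal M_G^{-1}=(y_{a+b})$ on rows and columns $\{i,j,k\}$. It therefore vanishes when $i\in\{j,k\}$, since two rows coincide, so the inner sum may run over all $i\in G$. Expanding $\Gamma$ along the $i$-row gives
\[
\Gamma(i,j,k)=y_{2i}\bigl(y_{2j}y_{2k}-y_{j+k}^2\bigr)+2y_{i+j}y_{i+k}y_{j+k}-y_{2j}y_{i+k}^2-y_{2k}y_{i+j}^2 .
\]
As in Lemma~\ref{lem:F1-det}, oddness of $|G|$ and \eqref{eq:conv} give $\sum_i x_{2i}y_{2i}=\sum_r x_ry_r=1$. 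Hence the first term contributes exactly $\Delta\sum_{j<k}x_{j+k}^2(y_{2j}y_{2k}-y_{j+k}^2)=T_2(\mathcal M_G)$ by \eqref{eq:jacobi2}.

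\emph{Step two: isolate the remainder.} Set $Q(j,k)=\sum_{i\in G}x_{2i}y_{i+j}y_{i+k}$. By step one,
\[
T_{12}-T_2=\Delta\sum_{j<k}x_{j+k}^2\bigl(2y_{j+k}Q(j,k)-y_{2j}Q(k,k)-y_{2k}Q(j,j)\bigr).
\]
The summand is symmetric in $(j,k)$ and vanishes when $j=k$. So it suffices to prove
\[
\Sigma:=\sum_{j,k\in G}x_{j+k}^2\bigl(y_{j+k}Q(j,k)-y_{2j}Q(k,k)\bigr)=0 .
\]

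\emph{Step three: prove $\Sigma=0$.} Both pieces of $\Sigma$ are triple sums over $(i,j,k)\in G^3$ of monomials whose indices are linear forms. Since $2$ is invertible on $G$, I would first try to match them by an invertible $\mathbb Z[1/2]$-linear change of variables, combined where needed with \eqref{eq:conv} in the form $\sum_b x_{a+b}y_{b+c}=\delta_{a,c}$ to collapse a paired $x\cdot y$ sum. A naive termwise matching fails, because the index multisets of the two pieces differ. So I expect this to be the hard step, and I would go to Fourier analysis.

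Write $\hat x(\chi)=\sum_g\chi(g)x_g$. Diagonalizing the group-Hankel matrices, $\mathcal M_G^{-1}=(y_{a+b})$ forces
\[
y_g=\frac1n\sum_{\chi}\frac{\overline{\chi(g)}}{\hat x(\chi)} ,
\]
which I will double-check against \eqref{eq:conv}. Substitute this, together with $x_g=\frac1n\sum_\chi\overline{\chi(g)}\hat x(\chi)$, into both pieces of $\Sigma$. Each sum over $i,j,k$ then becomes a character-orthogonality constraint. Because of the invertibility of doubling, the characters attached to $2i$, $2j$, $j+k$ can be reparametrized by $\chi^{1/2}$. Both pieces should then reduce to the same sum over triples of characters $\chi_1\chi_2\chi_3=1$ of a rational expression such as
\[
\frac{\hat x(\chi_1)\hat x(\chi_2)\hat x(\chi_3)}{\hat x(\cdot)\hat x(\cdot)\hat x(\cdot)} .
\]
Comparing them after relabeling the characters should show that they cancel. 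If the exact matching still fails, I would first check the identity numerically for $G=C_7$ to pin down the correct relabeling before writing the general argument.
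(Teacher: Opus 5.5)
Your Steps one and two are correct and match the paper's bookkeeping. The $y_{2i}$-part of $\Gamma$ reproduces $T_2$ via $\sum_i x_{2i}y_{2i}=1$, and the lemma reduces to your identity $\Sigma=0$.

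\textbf{The gap is Step three.} It carries the actual content of the lemma, and you never prove it. The Fourier plan is stated with ``should'' and a numerical fallback, and the predicted final shape is not derived. Your inversion formula is also the one for the Toeplitz matrix $(x_{a-b})$. For $(y_{a+b})=(x_{a+b})^{-1}$ with $\hat x(\chi)=\sum_g\chi(g)x_g$, one gets $y_g=\frac1n\sum_\chi\chi(g)/\hat x(\chi)$ instead. Finally, the obstacle you name is a red herring. The two pieces of $\Sigma$ indeed cannot be matched to each other by a change of variables, but they do not need to be.

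\textbf{The missing idea:} each piece collapses separately to $S=\sum_{s\in G}x_s^2y_s^2$, after one invertible change of variables and one use of \eqref{eq:conv}. For the first piece,
\[
\sum_{j,k}x_{j+k}^2y_{j+k}Q(j,k)=\sum_{i,j,k}x_{2i}x_{j+k}^2y_{i+j}y_{i+k}y_{j+k}.
\]
Substitute $(u,v,s)=(i+j,i+k,j+k)$, a bijection of $G^3$ because $2$ is invertible, with $2i=u+v-s$. This gives $\sum_{v,s}x_s^2y_vy_s\sum_u x_{u+v-s}y_u$. The inner sum is $\delta_{v,s}$ by \eqref{eq:conv}, leaving $S$.

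For the second piece,
\[
\sum_{j,k}x_{j+k}^2y_{2j}Q(k,k)=\sum_{i,j,k}x_{2i}x_{j+k}^2y_{2j}y_{i+k}^2.
\]
Substitute $(t,v,s)=(i-j,2j,j+k)$, again a bijection by oddness, with $2i=v+2t$ and $i+k=s+t$. This gives $\sum_{s,t}x_s^2y_{s+t}^2\sum_v x_{v+2t}y_v$. The inner sum is $\delta_{2t,0}=\delta_{t,0}$ by \eqref{eq:conv} and oddness, leaving $S$ again. Hence $\Sigma=S-S=0$.

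A correct Fourier computation would only rediscover these two collapses, at much greater cost. There, the $u$- or $v$-summation forces a numerator character and a denominator character to coincide and cancel.
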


\begin{proof}
Set
\[
        C=\sum_{s\in G}x_s^2\sum_{t\in G}y_ty_{2s-t},
        \qquad
        S=\sum_{s\in G}x_s^2y_s^2.
\]
By \eqref{eq:jacobi2}, extending the symmetric sum over $i<j$ to an ordered sum gives
\begin{align}\label{eq:T2calc}
\frac{2T_2(\mathcal M_G)}{\Delta}
&=\sum_{i,j}x_{i+j}^2(y_{2i}y_{2j}-y_{i+j}^2)\notag\\
&=C-nS.
\end{align}
Indeed, the map $(i,j)\mapsto (s,t)=(i+j,2i)$ is a bijection of $G^2$, and for fixed $s=i+j$ there are $n$ ordered pairs $(i,j)$.

Similarly, because $\Gamma$ is symmetric and vanishes when two indices coincide,
\begin{equation}\label{eq:T12-start}
        \frac{2T_{12}(\mathcal M_G)}{\Delta}=
        \sum_{i,j,k}x_{2i}x_{j+k}^2\Gamma(i,j,k).
\end{equation}
Split the right-hand side according to the five terms in $\Gamma$ as
\[
        B_1+2B_2-B_3-B_4-B_5.
\]
The first and third terms are
\[
        B_1=C,
        \qquad B_3=nS,
\]
using \eqref{eq:conv} and the same reindexing as above.

For $B_2$, use the bijection
\[
        (i,j,k)\longmapsto (u,v,s)=(i+j,i+k,j+k),
\]
whose inverse exists because $2$ is invertible in $G$.  Then
\[
        B_2=\sum_{u,v,s}x_{u+v-s}x_s^2y_uy_vy_s
        =\sum_{v,s}x_s^2y_vy_s\sum_u x_{u+v-s}y_u.
\]
By \eqref{eq:conv}, the inner sum is $\delta_{v,s}$; hence $B_2=S$.

For $B_4$, use the bijection
\[
        (i,j,k)\longmapsto (t,v,s)=(i-j,2j,j+k).
\]
Then
\[
        B_4=\sum_{t,v,s}x_{v+2t}x_s^2y_vy_{s+t}^2
        =\sum_{s,t}x_s^2y_{s+t}^2\sum_v x_{v+2t}y_v.
\]
The inner sum is $\delta_{2t,0}=\delta_{t,0}$ by \eqref{eq:conv} and oddness.  Therefore $B_4=S$.  By symmetry $B_5=S$.  Substituting into \eqref{eq:T12-start} gives
\[
        \frac{2T_{12}(\mathcal M_G)}{\Delta}=C+2S-nS-S-S=C-nS,
\]
which agrees with \eqref{eq:T2calc}.
\end{proof}

\begin{proof}[Proof of Theorem \ref{immtwins}.]
Combining Proposition \ref{prop:twin-reduction} and Lemmas \ref{lem:F1-det}, \ref{lem:T12-T2}, we obtain the desired result.
\end{proof}

\begin{proposition}\label{prop:twin-even}
For $G=C_n$ with $n\ge6$ even,
\[
[x_0^n]\left(\imm_{(4,1^{n-4})}(\mathcal M_G)-\imm_{(2,2,2,1^{n-6})}(\mathcal M_G)\right)
        =(3-n)(-1)^{(n-2)/2}.
\]
In particular the two immanants are not equal.
\end{proposition}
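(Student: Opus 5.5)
The plan is to read off the coefficient of $x_0^n$ directly from the character-difference identity \eqref{eq:twin-char-diff}, without using the principal-minor machinery. The two partitions $(4,1^{n-4})$ and $(2,2,2,1^{n-6})$ both exist for $n\ge 6$. Identity \eqref{eq:twin-char-diff} holds for every permutation $\sigma$, since it is a pointwise identity of class functions obtained from \eqref{eq:h3char} and \eqref{eq:j3char}. Hence
\[
\imm_{(4,1^{n-4})}(\mathcal M_G)-\imm_{(2,2,2,1^{n-6})}(\mathcal M_G)
=\sum_{\sigma\in\Sym(G)}\sgn(\sigma)(c_1(\sigma)-1)(1-2c_2(\sigma))\prod_{a\in G}x_{a+\sigma(a)}.
\]

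First I identify which permutations contribute to the monomial $x_0^n$. The product $\prod_a x_{a+\sigma(a)}$ equals $x_0^n$ exactly when $a+\sigma(a)=0$ for every $a$. This forces $\sigma(a)=-a$, so the unique contributor is the inversion map $\iota:a\mapsto -a$ on $G=C_n$. The coefficient is therefore the single value $\sgn(\iota)(c_1(\iota)-1)(1-2c_2(\iota))$.

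Next I compute the cycle type of $\iota$ for $n$ even. Its fixed points are the solutions of $2a=0$ in $C_n$, namely $0$ and $n/2$, so $c_1=2$. Every other element lies in a $2$-cycle $\{a,-a\}$, so $c_2=(n-2)/2$ and there are no longer cycles. Consequently $\sgn(\iota)=(-1)^{(n-2)/2}$, and the coefficient equals
\[
(-1)^{(n-2)/2}\cdot 1\cdot\bigl(1-(n-2)\bigr)=(3-n)(-1)^{(n-2)/2}.
\]
Since $n\ge 6$, this value is nonzero, so the two immanants differ.

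There is no real obstacle here. The only points to check are that \eqref{eq:twin-char-diff} is valid as a pointwise class-function identity for all $n\ge 6$, rather than only for odd $n$, and that the monomial $x_0^n$ has exactly one preimage permutation.
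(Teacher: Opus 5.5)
Your proposal is correct and follows the paper's proof: $x_0^n$ arises only from the inversion $a\mapsto -a$, which for even $n$ has $c_1=2$ and $c_2=(n-2)/2$. Substituting these into \eqref{eq:twin-char-diff} gives $(3-n)(-1)^{(n-2)/2}\neq 0$. The point you flag is fine, since \eqref{eq:h3char} and \eqref{eq:j3char} are valid character formulas for every $n\ge 6$, not only for odd $n$.
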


\begin{proof}
The monomial $x_0^n$ comes only from $\sigma(i)=-i$.  For even $n$ this permutation has two fixed points and $(n-2)/2$ transpositions.  Substituting $c_1=2$ and $c_2=(n-2)/2$ into \eqref{eq:twin-char-diff} gives the desired result.
\end{proof}

\section{Concluding remarks}\label{sec:questions}

In this paper, we proved several  results concerning the immanants $\imm_{\lambda}(\mathcal M_{G})$ of a Cayley table. It is interesting to study the converse of Theorem \ref{PGDG}. It is known that $\mathcal D(G)<\mathcal P(G)$ when $G$ is a non-cyclic abelian group of order 12. We have some partial results (as well as numerical results) strongly suggest that $\mathcal D(G)=\mathcal P(G)$ only if $|G|$ is a prime power. Theorem \ref{immtwins} suggests that it is worthy to study the immanants $\imm_{\lambda}(\mathcal M_{G})$ for general partition $\lambda$ and find more similar relations. Meanwhile, we also have some partial results (as well as numerical results) supporting Conjecture \ref{mainconj}.(3) and it is could be interesting to find other partitions $\lambda$ with $\mathcal I_{\lambda}(G)=\mathcal P(G)$ or $\mathcal D(G)$. These problems are our next targets to study.

\bigskip

\noindent {\bf Acknowledgments.}
    We are grateful to Prof. Arthur L. Yang for helpful suggestions. This work was supported by Guangdong Basic and Applied Basic
	Research Foundation Grant No.2024A1515012564.

\end{document}